\documentclass[12pt,a4paper]{article}

\usepackage{a4wide}

\usepackage[latin1]{inputenc}
\usepackage[T1]{fontenc}
\usepackage[english]{babel}

\usepackage{amsmath}
\usepackage{amssymb}
\usepackage{amsfonts,makeidx,amsthm,mathrsfs}
%\usepackage{version}
%\usepackage{graphicx}
%\usepackage{url}
%\usepackage[all]{xy}

%%%%%%%%%%%%%%%%%%%%%%%%%%%%%%%%%%%%%%%%

\newcommand{\Prond}{\mathscr{P}}
\newcommand{\F}{\mathscr{F}}

\newcommand{\Lr}{\mathscr{L}}
\newcommand{\dvol}{\frac{\omega^n}{n!}}
\newcommand{\dvolphi}{\frac{\omega_{\phi}^n}{n!}}
\newcommand{\dsubvol}{\frac{\omega^{n-1}}{(n-1)!}}
\newcommand{\dsubsubvol}{\frac{\omega^{n-2}}{(n-2)!}}

%%%%%%%%%%%%%%%%%%%%%%%%%%%%%%%%%%%%%%%%%
%%%%%%%%%%%%%% Linear alg command %%%%%%%%%%%%%%%

\newcommand{\tr}{\mathrm{tr}}

%%%%%%%%%%%%%%%%%%%%%%%%%%%%%%%%%%%%%%%%%
%%%%%%%%%%%%%%% Alg command %%%%%%%%%%%%%%%%%%%

%%%%%%%%%%%%%%%%%%%%%%%%%%%%%%%%%%%%%%%%%
%%%%%%%%%%%%%%%%% Lie alg command %%%%%%%%%%%%%%%%

\newcommand{\h}{\mathfrak{h}}

%%%%%%%%%%%%%%%%%%%%%%%%%%%%%%%%%%%%%%%%
%%%%%%%%%%%%%%% Field command%%%%%%%%%%%%%%%%%

\newcommand{\R}{\mathbb{R}}
\newcommand{\C}{\mathbb{C}}

%%%%%%%%%%%%%%%%%%%%%%%%%%%%%%%%%%%%%%%%
%%%%%%%%%%%%%%% Ham diff command %%%%%%%%%%%%%%%

\def\ham{\mathop{\mathrm{Ham}}\nolimits}

\def\symp{\mathop{\mathrm{Symp}}\nolimits}

%%%%%%%%%%%%%%%%%%%%%%%%%%%%%%%%%%%%%%%%
%%%%%%%%%%%%% Moment Project comand %%%%%%%%%%%%%%%%

\newcommand{\E}{\mathcal{E}}
\newcommand{\J}{\mathcal{J}}

\newcommand{\LC}{\mathrm{lc}}
\newcommand{\MOm}{\mathcal{M}_{\Theta}}
\newcommand{\grad}{\textrm{grad}}
\newcommand{\D}{\mathcal{D}}

\newcommand{\extwedge}{\stackrel{\circ}{\wedge}}

%%%%%%%%%%%%%%%%%%%%%%%%%%%%%%%%%%%%%%%%
%%%%%%%%%%%%% Quantization comand %%%%%%%%%%%%%%%%

%%%%%%%%%%%%%%%%%%%%%%%%%%%%%%%%%%%%%%%
%%%%%%%%%%%%%%% Theorem %%%%%%%%%%%%%%%%%%%%

\newtheorem{theoremintro}{Theorem}
\newtheorem{theoremprinc}{Theorem}
\newtheorem{theorem}{Theorem}[section]

\newtheorem{lemme}[theorem]{Lemma}
\newtheorem{cor}[theorem]{Corollary}
\newtheorem{prop}[theorem]{Proposition}
\newtheorem{defi}[theorem]{Definition}

\theoremstyle{definition}

\theoremstyle{remark}
\newtheorem{obs}[theorem]{Observation}
\newtheorem{rem}[theorem]{Remark}

%%%%%%%%%%%%%%%%%%%%%%%%%%%%%%%%%%%%%%%
%%%%%%%%%%%%%%%%%%%%%%%%%%%%%%%%%%%%%%%

\begin{document}

\renewcommand{\refname}{Bibliography}

\title{Futaki invariant for Fedosov's star products.\footnote{Work supported by the Belgian Interuniversity Attraction Pole (IAP) DYGEST}}

\author{\small La Fuente-Gravy Laurent \\
	\scriptsize{lfuente@ulg.ac.be}\\
	\footnotesize{D\'epartement de Math\'ematiques, Universit\'e de Li\`ege}\\[-7pt]
	\footnotesize 4000 Li\`ege, Belgium \\[-7pt]} 

\maketitle

\begin{abstract}
We study obstructions to the existence of closed Fedosov's star products on a given K\"ahler manifold $(M,\omega,J)$. In our previous paper \cite{LLF}, we proved that the Levi-Civita connection of a K\"ahler manifold will produce a closed (in the sense of Connes-Flato-Sternheimer \cite{CFS}) Fedosov's star product only if it is a zero of a moment map $\mu$ on the space of symplectic connections. By analogy with the Futaki invariant obstructing the existence of cscK metrics, we build an obstruction for the existence of zero of $\mu$ and hence for the existence of closed Fedosov's star product on a K\"ahler manifold.
\end{abstract}

\noindent {\footnotesize {\bf Keywords:} Symplectic connections, Moment map, Deformation quantization, closed star products, K\"ahler manifolds.\\
{\bf Mathematics Subject Classification (2010):}  53D55, 53D20, 32Q15, 53C21}

\newpage

\tableofcontents

%%%%%%%%%%%%%%%%%%%%%%%%%%%%%%%%%%%%%%%%%%%%%%%%%%%%%%%%%%%%%%%%%%%%%%%%%%%%%%%%%%%%%%%%%%%%%%%%%%%%%%%%
%%%%%%%%%%%%%%%%%%%%%%%%%%%%%%%%%%%%%%%%%%%%%%%%%%%%%%%%%%%%%%%%%%%%%%%%%%%%%%%%%%%%%%%%%%%%%%%%%%%%%%%%

\section{Introduction}

%%%%%%%%%%%%%%%%%%%%%%%%%%%%%%%%%%%%%%%%%%%%%%%%%%%%%%%%%%%%%%%%%%%%%%%%%%%%%%%%%%%%%%%%%%%%%%%%%%%%%%%%

In \cite{cagutt}, a moment map $\mu$ on the space of symplectic connections is introduced. The study of zeroes of $\mu$ and of the so-called critical symplectic connections was first proposed by D.J. Fox \cite{Fox} in analogy with the moment map picture for the Hermitian scalar curvature on almost-K\"ahler manifolds. Recently \cite{LLF}, we give additional motivations for the study of $\mu$, and its zeroes on K\"ahler manifolds, coming from the formal deformation quantization of symplectic manifolds.

Our goal is to exhibit an obstruction to the existence of zeroes of $\mu$ on closed K\"ahler manifolds in the spirit of Futaki invariants. We will consider closed K\"ahler manifolds $(M,\omega,J)$ with K\"ahler class $\Theta$ and denote by $\mathfrak{h}$ the Lie algebra of the reduced group of complex automorphisms of the K\"ahler manifold that is the Lie algebra of holomorphic vector fields of the form $Z=X_F+JX_H$, where we denoted by $X_K$ the Hamiltonian vector field defined by $i(X_K)\omega=dK$ for $K\in C^{\infty}(M)$ normalised by $\int_M K\dvol=0$. Our first result is:
\begin{theoremintro} \label{theorprinc:Futinvdef}
Let $(M,\omega,J)$ be a closed K\"ahler manifold with K\"ahler class $\Theta$ and Levi-Civita connection $\nabla$. Let $\mathfrak{h}$ be the Lie algebra of the reduced group of complex automorphisms of the K\"ahler manifold. Then, the map 
\begin{equation*}
\mathcal{F}^{\omega}:\mathfrak{h} \rightarrow \R : Z \mapsto \int_M H\mu(\nabla)\dvol,
\end{equation*}
for $Z=X_F+JX_H$ and $\mu$ is the Cahen-Gutt moment map on $\E(M,\omega)$, is a character that does not depend on the choice of a K\"ahler form in the K\"ahler class $\Theta$.
\end{theoremintro}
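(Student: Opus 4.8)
The assertion decomposes into three parts: $\mathcal{F}^{\omega}$ is $\R$-linear, it does not depend on the chosen representative $\omega\in\Theta$, and it vanishes on $[\mathfrak h,\mathfrak h]$; together these say that it is a character. Linearity is immediate, since $K\mapsto X_K$ is linear and the normalisation $\int_M K\,\dvol=0$ is a linear condition, so $Z\mapsto(F,H)$ is linear and hence so is $Z\mapsto\int_M H\mu(\nabla)\dvol$. For the bracket identity I would use class-independence together with the naturality of the Cahen--Gutt construction under diffeomorphisms: if $\Phi\in\aut_0(M,J)$, then $\Phi^{*}\omega$ is a Kähler form in the same class $\Theta$ (the identity component acts trivially on $H^2(M)$), $\Phi^{*}\nabla$ is the Levi-Civita connection of $(\Phi^{*}\omega,J)$, $\mu_{\Phi^{*}\omega}(\Phi^{*}\nabla)=\Phi^{*}\big(\mu_\omega(\nabla)\big)$, and $\Phi^{*}(X_F+JX_H)=X_{\Phi^{*}F}+JX_{\Phi^{*}H}$, whence $\mathcal{F}^{\Phi^{*}\omega}(\Phi^{*}Z)=\mathcal{F}^{\omega}(Z)$. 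Combining this with $\mathcal{F}^{\Phi^{*}\omega}=\mathcal{F}^{\omega}$ (class-independence) gives $\mathcal{F}^{\omega}(\Phi^{*}Z)=\mathcal{F}^{\omega}(Z)$; differentiating at $s=0$ along $\Phi=\Phi_s$, the flow of $Z_2\in\mathfrak h$, yields $\mathcal{F}^{\omega}([Z_1,Z_2])=0$. So the whole statement reduces to independence of the Kähler form.

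To prove the latter, fix $Z=X_F+JX_H\in\mathfrak h$ and a smooth path $(\omega_t)$ of Kähler forms in $\Theta$, which by the $\partial\bar\partial$-lemma we may write with $\dot\omega_t=dd^{c}\dot\phi_t$ (here $d^{c}:=-d\circ J$), and denote by $g_t,\nabla_t,\Delta_t,H_t$ and $\mu_t:=\mu(\nabla_t)$ the associated objects. The plan is to differentiate $\mathcal{F}^{\omega_t}(Z)=\int_M H_t\,\mu_t\,\tfrac{\omega_t^{n}}{n!}$ and show the derivative vanishes, using three first-variation formulas: $\tfrac{d}{dt}\tfrac{\omega_t^{n}}{n!}=(\Lambda_t\dot\omega_t)\tfrac{\omega_t^{n}}{n!}$, with $\Lambda_t\dot\omega_t$ equal, up to a convention-dependent positive constant, to $-\Delta_t\dot\phi_t$; the identity $\mathcal L_Z\omega_t=dd^{c}H_t$ (valid because $X_F$ is symplectic and $JX_H=\grad_{g_t}H_t$ satisfies $i(\grad_{g_t}H_t)\omega_t=d^{c}H_t$), which upon differentiation, using that $Z$ commutes with $d$ and $d^{c}$, gives $\dot H_t=Z(\dot\phi_t)+c_t$ for a spatial constant $c_t$ fixed by the normalisation of $H_t$; and the linearisation of $\omega\mapsto\mu(\nabla^{\omega})$ along Kähler potentials, for which I would invoke \cite{LLF}. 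Because $\mu_t$ is a moment map it has zero average, $\int_M\mu_t\,\tfrac{\omega_t^{n}}{n!}=0$, so the constant $c_t$ contributes nothing and
\begin{equation*}
\tfrac{d}{dt}\mathcal{F}^{\omega_t}(Z)=\int_M Z(\dot\phi_t)\,\mu_t\,\tfrac{\omega_t^{n}}{n!}+\int_M H_t\,\dot\mu_t\,\tfrac{\omega_t^{n}}{n!}+\int_M H_t\,\mu_t\,(\Lambda_t\dot\omega_t)\,\tfrac{\omega_t^{n}}{n!}.
\end{equation*}

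The crucial point is that, by \cite{LLF}, the linearisation $\dot\mu_t=\tfrac{d}{dt}\mu(\nabla^{\omega_t})$ has the form $-\mathcal L_t^{*}\mathcal L_t\dot\phi_t$ plus a first-order transport term built from $\grad_{g_t}\mu_t$, where $\mathcal L_t$ is the natural Lichnerowicz-type operator whose kernel consists exactly of the functions $K$ for which $\grad_{g_t}K$ is a (real) holomorphic vector field. Since $Z=X_F+JX_H\in\mathfrak h$, the field $JX_{H_t}=\grad_{g_t}H_t$ is holomorphic, so $\mathcal L_t H_t=0$, whence $\int_M H_t\,(-\mathcal L_t^{*}\mathcal L_t\dot\phi_t)\tfrac{\omega_t^{n}}{n!}=-\int_M(\mathcal L_t H_t)(\mathcal L_t\dot\phi_t)\tfrac{\omega_t^{n}}{n!}=0$. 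This is the key cancellation: it removes the only higher-order term, leaving a sum of integrals that are at most first order in $\mu_t$ and in $\dot\phi_t$ — the term $\int Z(\dot\phi_t)\mu_t$, the transport part of $\int H_t\dot\mu_t$, and $-\int H_t\mu_t\Delta_t\dot\phi_t$. I would then integrate each of these by parts, using that $X_F$ preserves $\tfrac{\omega_t^{n}}{n!}$, that $\grad_{g_t}H_t$ is holomorphic, and that $\Delta_t$ is self-adjoint, and check that the result vanishes for every $\dot\phi_t$ (again discarding global constants via $\int_M\mu_t=0$). I expect this last step — pinning down the precise transport term in the linearisation of the Cahen--Gutt moment map, which, unlike the cscK case, involves higher-order derivatives of the curvature, and then seeing the first-order remainder cancel — to be the main obstacle, and it is exactly where the explicit descriptions of $\mu$ and of $\mathcal L_t$ from \cite{LLF} are indispensable.

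A more conceptual alternative, which I would keep in reserve, is the Fujiki--Donaldson reformulation: instead of moving $\omega$, realise the Kähler metrics of class $\Theta$ by moving $J$ inside the complexified $\ham(M,\omega)$-orbit of $J_0$ in the space of $\omega$-compatible integrable almost complex structures, pull the Cahen--Gutt moment map back along $J\mapsto\nabla^{g_{\omega,J}}$, and invoke the general fact that the pairing of a moment map with an element of the infinitesimal stabiliser is constant along a complexified orbit. This replaces the explicit computation above by the verification that $J\mapsto\nabla^{g_{\omega,J}}$ intertwines the relevant symplectic and $\ham$-structures, and it would make the character property and the independence of $\Theta$ visible at once.
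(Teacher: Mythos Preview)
Your deduction of the character property from class-independence plus naturality is exactly what the paper does, so that part is fine. The issue is with your primary route to class-independence.

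The gap is in the sentence ``$\mathcal L_t H_t=0$ since $\grad_{g_t}H_t$ is holomorphic''. In the Cahen--Gutt setting the natural operator is $\D K:=\Lr_{X_K}\nabla$, and its kernel (on a compact K\"ahler manifold) consists of the \emph{Killing} potentials, i.e.\ those $K$ with $X_K$ real holomorphic, not those with $\grad K=JX_K$ real holomorphic. For a general $Z=X_F+JX_H\in\mathfrak h$ one only has $\Lr_{X_F}J=-\Lr_{JX_H}J$; neither $X_H$ nor $JX_H$ is separately holomorphic, so there is no reason for $\D H$ (or any real Lichnerowicz-type operator applied to $H$) to vanish. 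The analogy with the cscK case breaks down precisely here: in that setting one works with the \emph{complex} potential $u_Z$ and the holomorphy-potential operator $\bar\partial\nabla^{1,0}$, and $u_Z$ lies in its kernel; the real function $H$ alone does not. So the ``key cancellation'' you rely on does not hold, and the remaining first-order bookkeeping cannot rescue it.

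What actually makes the proof work is your ``reserve'' approach, which is exactly the paper's. One uses Moser's trick to replace the path $\omega_{\phi(t)}$ by a path $J_t=f_t^{-1}\!\cdot J$ in $\J_{int}(M,\omega)$, so that the moment map is computed against a fixed $\omega$ and the second term becomes $\Omega^{\E}\bigl(\Lr_{X_H}\nabla,\ \LC_{*J}(-J\Lr_{X_{\dot\phi}}J)\bigr)$. The substitute for your kernel argument is the $I$-invariance $(\LC^*\Omega^{\E})_J(JA,JA')=(\LC^*\Omega^{\E})_J(A,A')$ (Lemma~\ref{lemme:Iinv}): it turns this into $\Omega^{\E}\bigl(\LC_{*J}(J\Lr_{X_H}J),\ \LC_{*J}(\Lr_{X_{\dot\phi}}J)\bigr)$, and \emph{then} holomorphicity of $Z$ is used in the form $J\Lr_{X_H}J=-\Lr_{X_F}J$. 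This yields $-\Omega^{\E}(\Lr_{X_F}\nabla,\Lr_{X_{\dot\phi}}\nabla)=\int_M X_{\dot\phi}(F)\,\mu(\nabla)\,\dvol$, which cancels the first term $\int_M X_F(\dot\phi)\,\mu(\nabla)\,\dvol$. In short: the holomorphicity of $Z$ enters not as ``$H$ lies in a kernel'' but as an identity swapping $H$ for $F$ via the complex structure on $\J_{int}(M,\omega)$, and that swap is what your direct approach is missing.
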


Deformation quantization as defined in \cite{BFFLS} is a formal associative deformation of the Poisson algebra $(C^{\infty}(M),.,\{\cdot,\cdot\})$ of a Poisson manifold $(M,\pi)$ in the direction of the Poisson bracket. The deformed algebra is the space $C^{\infty}(M)[[\nu]]$ of formal power series of smooth functions with composition law $*$ called star product. 

On a symplectic manifold $(M,\omega)$ endowed with a symplectic connection $\nabla$ (i.e. torsion-free connection leaving $\omega$ parallel), one can associate the Fedosov's star product $*_{\nabla}$, \cite{fed2}. The moment map $\mu$ evaluated at $\nabla$ is the first non-trivial term in the expression of a trace density for the star product $*_{\nabla}$, see \cite{LLF}. So that, if the star product $*_{\nabla}$ is closed (in the sense of Connes-Flato-Sternheimer \cite{CFS}), then $\mu(\nabla)$ is the zero function which implies the following result.

\begin{cor} \label{cor:corprinc}
Let $(M,\omega,J)$ be a closed K\"ahler manifold with K\"ahler class $\Theta$, such that $\mathcal{F}^{\omega}$ is not identically zero, then, given any  K\"ahler form $\widetilde{\omega} \in \MOm$ with Levi-Civita connection $\widetilde{\nabla}$, the Fedosov's star product $*_{\widetilde{\nabla}}$ is not closed.
\end{cor}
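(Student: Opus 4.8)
The plan is to deduce the corollary from Theorem \ref{theorprinc:Futinvdef} by a short contradiction argument, using the trace-density interpretation of $\mu$ recalled just above the statement. So suppose that for some K\"ahler form $\widetilde{\omega}\in\MOm$, with Levi-Civita connection $\widetilde{\nabla}$, the Fedosov star product $*_{\widetilde{\nabla}}$ is closed in the sense of Connes--Flato--Sternheimer.

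First I would invoke the result of \cite{LLF}: $\mu(\widetilde{\nabla})$ is, up to a universal nonzero constant, the leading nontrivial term of a trace density of $*_{\widetilde{\nabla}}$ computed against the Liouville volume $\frac{\widetilde{\omega}^{\,n}}{n!}$. Closedness of $*_{\widetilde{\nabla}}$ means precisely that $\frac{\widetilde{\omega}^{\,n}}{n!}$ is itself a trace density, so matching the relevant coefficient forces $\mu(\widetilde{\nabla})\equiv 0$ on $M$. Next, since the reduced Lie algebra $\mathfrak{h}$ of holomorphic vector fields $Z=X_F+JX_H$ depends only on $J$ and not on the chosen metric in $\Theta$, the functional $\mathcal{F}^{\widetilde{\omega}}$ is defined on the very same $\mathfrak{h}$, and for every such $Z$,
\[
\mathcal{F}^{\widetilde{\omega}}(Z) \;=\; \int_M H\,\mu(\widetilde{\nabla})\,\frac{\widetilde{\omega}^{\,n}}{n!} \;=\; 0,
\]
where $H$ is now the Hamiltonian normalised with respect to $\frac{\widetilde{\omega}^{\,n}}{n!}$; hence $\mathcal{F}^{\widetilde{\omega}}\equiv 0$. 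Finally, applying the invariance part of Theorem \ref{theorprinc:Futinvdef} — namely $\mathcal{F}^{\widetilde{\omega}}=\mathcal{F}^{\omega}$ because $\widetilde{\omega}$ and $\omega$ represent the same K\"ahler class $\Theta$ — I would conclude $\mathcal{F}^{\omega}\equiv 0$, contradicting the hypothesis, which proves the corollary.

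The only delicate point is the first step: one must check that the trace-density characterization of $\mu$ from \cite{LLF} applies verbatim to the Levi-Civita connection of \emph{any} K\"ahler representative in $\Theta$, not merely to the reference $\omega$ used to set up $\mathcal{F}^{\omega}$, and that "closed" is read as the order-by-order identity making $\frac{\widetilde{\omega}^{\,n}}{n!}$ a trace density. Granting that, everything else is a purely formal consequence of Theorem \ref{theorprinc:Futinvdef}, so I do not expect any further obstacle.
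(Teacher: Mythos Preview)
Your proof is correct and matches the paper's own argument essentially line for line: assume closedness, deduce $\mu^{\widetilde{\omega}}(\widetilde{\nabla})=0$ from the trace-density formula of \cite{LLF}, conclude $\mathcal{F}^{\widetilde{\omega}}\equiv 0$, and then invoke the invariance in Theorem~\ref{theorprinc:Futinvdef} to get $\mathcal{F}^{\omega}\equiv 0$. The paper's version is simply terser, compressing the last two steps into a single ``hence $\mathcal{F}^{\omega}=0$''.
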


Finally, we identify the character $\mathcal{F}^{\omega}$ with one of the so-called higher Futaki invariants. It enables us to exhibit an example of K\"ahler manifolds \cite{NP,Ono} admitting non-zero values of $\mathcal{F}$ and hence no closed Fedosov's star products as considered in the above Corollary \ref{cor:corprinc}.

%%%%%%%%%%%%%%%%%%%%%%%%%%%%%%%%%%%%%%%%%%%%%%%%%%%%%%%%%%%%%%%%%%%%%%%%%%%%%%%%%%%%%%%%%%%%%%%%%%%%%%%%
%%%%%%%%%%%%%%%%%%%%%%%%%%%%%%%%%%%%%%%%%%%%%%%%%%%%%%%%%%%%%%%%%%%%%%%%%%%%%%%%%%%%%%%%%%%%%%%%%%%%%%%%

\section{The moment map and Fedosov's star products} \label{sect:symplconn}

%%%%%%%%%%%%%%%%%%%%%%%%%%%%%%%%%%%%%%%%%%%%%%%%%%%%%%%%%%%%%%%%%%%%%%%%%%%%%%%%%%%%%%%%%%%%%%%%%%%%%%%%

Consider a closed symplectic manifold $(M,\omega)$ of dimension $2n$. A symplectic connection $\nabla$ on $(M,\omega)$ is a torsion-free connection such that $\nabla \omega=0$. There always exists a symplectic connection on a symplectic manifold and the space $\E(M,\omega)$ of symplectic connections is the affine space
$$\E(M,\omega)=\nabla+ \Gamma(S^3T^*M) \textrm{ for some } \nabla \in \E(M,\omega),$$
where $S^3T^*M:=\{A\in \Lambda^1(M)\otimes End(TM) \ | \ \omega(A(\cdot)\cdot,\cdot) \textrm{ is completely symmetric}\}$. For $A\in S^3T^*M$, we set $\underline{A}(\cdot,\cdot,\cdot)$ for the symmetric $3$-tensor $\omega(A(\cdot)\cdot,\cdot)$.

There is a natural symplectic form on $\E(M,\omega)$. For $A, B \in T_{\nabla} \E(M,\omega)$, seen as element of $\Lambda^1 (M)\otimes End(TM,\omega)$, one defines
$$\Omega^{\E}_{\nabla}(A,B):= \int_M \tr(A\extwedge B)\wedge \dsubvol= -\int_M \Lambda^{kl}\tr(A(e_k)B(e_l))\frac{\omega^n}{n!},$$
where $\extwedge$ is the product on $\Lambda^1 (M)\otimes End(TM,\omega)$ induced by the usual $\wedge$-product on forms and the composition on the endomorphism part, $\Lambda^{kl}$ is defined by $\Lambda^{kl}\omega_{lt}=\delta^k_t$ for $\omega_{lt}:=\omega(e_l,e_t)$ for a frame $\{e_k\}$ of $T_xM$. 
The $2$-form $\Omega^{\E}$ is a symplectic form on $\E(M,\omega)$. 

\begin{rem}
The symplectic form $\Omega^{\E}$ can be written in coordinate as : 
$$\Omega^{\E}_{\nabla}(A,B):=\int_M \Lambda^{i_1j_1}\Lambda^{i_2j_2}\Lambda^{i_3j_3}\underline{A}_{i_1i_2i_3}\underline{B}_{j_1j_2j_3}\frac{\omega^n}{n!},$$
for $A,B\in T_{\nabla}\E(M,\omega)$.
\end{rem}

There is a natural symplectic action of the group of symplectomorphisms on $\E(M,\omega)$. For $\varphi$, a symplectic
diffeomorphism, we define an action
\begin{equation} \label{eq:action}
(\varphi.\nabla)_X Y := \varphi_*(\nabla_{\varphi^{-1}_* X}\varphi^{-1}_* Y),
\end{equation}
for all $X,Y \in TM$ and $\nabla \in \E(M,\omega)$. 

Recall that a Hamiltonian vector field is a vector field $X_F$ for $F\in C^{\infty}(M)$ such that
$i(X_F)\omega=dF.$
We denote by $\ham(M,\omega)$ the group of Hamiltonian diffeomorphisms of the symplectic manifold $(M,\omega)$ with Lie algebra the space $C^{\infty}_0(M)$ of smooth functions $F$ such that $\int_M F \dvol=0$. 

The action defined in Equation (\ref{eq:action}) restricts to an action of the group $\ham(M,\omega)$. Let $X_F$ be a Hamiltonian vector field with $F\in C^{\infty}_0(M)$, the fundamental vector field on $\E(M,\omega)$ associated to this action is
\begin{equation*}
(X_F)^{*\E}(Y)Z:=\frac{d}{dt}|_0\varphi_{-t}^F.\nabla=(\Lr_{X_F}\nabla)(Y)Z=\nabla^2_{(Y,Z)}X_F + R^{\nabla}(X_F,Y)Z,
\end{equation*}
where $R^{\nabla}(U,V)W:=[\nabla_U,\nabla_V]W-\nabla_{[U,V]}W$ is the curvature tensor of $\nabla$.

Denote by $Ric^{\nabla}$ the Ricci tensor of $\nabla$ defined by $Ric^{\nabla}(X,Y):=\tr[V\mapsto R^{\nabla}(V,X)Y]$
for all $X,Y \in TM$. 

\begin{theorem} [Cahen-Gutt \cite{cagutt}] \label{theor:momentE}
The map $\mu:\mathcal{E}(M,\omega) \rightarrow C^{\infty}_0(M)$ defined by
$$\mu(\nabla):=(\nabla^2_{(e_p,e_q)} Ric^{\nabla})(e^{p},e^q) + P(\nabla)-\mu_0$$
where $\{e_k\}$ is a frame of $T_{x}M$ and $\{e^l\}$ is the symplectic dual frame of $\{e_k\}$ (that is $\omega(e_k,e^l)=\delta_k^l$) and $P(\nabla)$ is the function defined by $P(\nabla)\dvol:=\frac{1}{2}\tr(R^{\nabla}(.,.)\extwedge R^{\nabla}(.,.))\wedge \dsubsubvol$ with $\int_M P(\nabla)\dvol=:\mu_0$, is a moment map for the action of $\ham(M,\omega)$ on $\E(M,\omega)$, i.e.
\begin{equation*} \label{eq:momentmu}
\frac{d}{dt}|_{0} \int_M \mu(\nabla+tA)F\dvol=\Omega^{\E}_{\nabla}((X_F)^{*\E},A).
\end{equation*}
\end{theorem}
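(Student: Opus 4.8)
The plan is to verify the defining identity directly, computing each side and matching them after integration by parts. Abbreviate $\langle\mu,F\rangle_\nabla:=\int_M\mu(\nabla)F\dvol$; the left-hand side is the differential $\frac{d}{dt}|_0\langle\mu,F\rangle_{\nabla+tA}=\int_M\dot\mu\,F\dvol$, where $\dot\mu:=\frac{d}{dt}|_0\mu(\nabla+tA)$, while the right-hand side is the symplectic pairing $\Omega^{\E}_\nabla((X_F)^{*\E},A)$ with $(X_F)^{*\E}=\Lr_{X_F}\nabla$. First I would linearize each term of $\mu$ in the direction $A\in\Gamma(S^3T^*M)$, integrate against $F$, and integrate by parts so that all covariant derivatives fall on $F$ (equivalently on $X_F$). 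The outcome should be a pointwise contraction of $\underline{A}$ against a completely symmetric $3$-tensor manufactured from $F$; identifying that tensor with $\underline{(\Lr_{X_F}\nabla)}$ and invoking the coordinate formula for $\Omega^{\E}$ from the Remark then yields the claim. Note at the outset that the constant $\mu_0$ is harmless: its variation is $x$-independent, so pairing it with $F\in C^\infty_0(M)$ contributes $-\dot\mu_0\int_M F\dvol=0$.

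The key linearizations are classical. The curvature varies by $\dot R(U,V)W=(\nabla_U A)(V)W-(\nabla_V A)(U)W$, i.e. $\dot R=d^\nabla A$; contracting gives $\dot{Ric}$, first order in $\nabla A$. Since $\nabla^2_{(Y,Z)}T=\nabla_Y\nabla_Z T-\nabla_{\nabla_Y Z}T$ itself depends on $\nabla$, the Leibniz rule splits the variation of $(\nabla^2_{(e_p,e_q)}Ric^\nabla)(e^p,e^q)$ into a principal part $(\nabla^2_{(e_p,e_q)}\dot{Ric})(e^p,e^q)$ plus terms in which $A$ acts algebraically on $Ric^\nabla$ and on $\nabla Ric^\nabla$. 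The Pontryagin-type term varies by $\dot P\,\dvol=\tr(\dot R\extwedge R^\nabla)\wedge\dsubsubvol$, again first order in $\nabla A$. The frames $\{e_k\},\{e^l\}$ depend only on $\omega$, hence are inert under the variation.

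The matching is organized by differential order. Integrating the principal part $(\nabla^2\dot{Ric})$ against $F$ and integrating by parts twice transfers the two outer derivatives onto $F$; since $\dot{Ric}$ is itself a contraction of $\nabla A$, a further integration by parts moves the last derivative onto the Hessian of $F$, leaving $A$ paired with a third-order operator in $F$. Geometrically this operator is $\nabla^2 X_F$, which is exactly the symmetric ``$\nabla^2$'' half of $(\Lr_{X_F}\nabla)(Y)Z=\nabla^2_{(Y,Z)}X_F+R^\nabla(X_F,Y)Z$ (recall $i(X_F)\omega=dF$, so $X_F$ is first order in $F$). The remaining curvature-coupled contributions --- the algebraic-in-$A$ terms produced by $\dot{\nabla^2}$, the commutator curvature generated while integrating $(\nabla^2\dot{Ric})$ by parts, and the whole $\dot P$ term --- must reassemble into $\underline{A}$ paired with $\underline{R^\nabla(X_F,\cdot)\cdot}$.

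The main obstacle is precisely this reassembly of the curvature terms. It will require the second Bianchi identity to reconcile the derivatives of $R^\nabla$ thrown off by integrating $(\nabla^2\dot{Ric})$ by parts against the $\tr(\dot R\extwedge R^\nabla)$ contribution from $\dot P$, and it will require the defining properties of symplectic connections --- that $\underline{A}(\cdot,\cdot,\cdot)=\omega(A(\cdot)\cdot,\cdot)$ is totally symmetric and that $\nabla\omega=0$ --- to rewrite every endomorphism trace and index contraction as the fully $\Lambda$-contracted symmetric pairing that defines $\Omega^{\E}$. Tracking the numerical coefficients (notably the factor $\tfrac12$ in $P$ against the antisymmetrization in $d^\nabla A$) is the delicate bookkeeping. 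Once the curvature terms are shown to sum to $\underline{R^\nabla(X_F,\cdot)\cdot}$ contracted with $\underline{A}$, comparison with the coordinate expression
\begin{equation*}
\Omega^{\E}_\nabla(\Lr_{X_F}\nabla,A)=\int_M\Lambda^{i_1j_1}\Lambda^{i_2j_2}\Lambda^{i_3j_3}\,\underline{(\Lr_{X_F}\nabla)}_{i_1i_2i_3}\,\underline{A}_{j_1j_2j_3}\dvol
\end{equation*}
completes the proof.
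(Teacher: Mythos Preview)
The paper does not actually prove this theorem: it is quoted from Cahen--Gutt and stated without proof. So there is no argument in the paper against which to compare yours.

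Your proposal is the correct strategy --- linearize the curvature and Ricci terms, integrate by parts to move derivatives onto $F$, and recognize the result as $\Omega^{\E}_\nabla(\Lr_{X_F}\nabla,A)$ --- but as written it is a plan rather than a proof. You yourself identify the ``main obstacle'' as the reassembly of the curvature-coupled terms into $\underline{R^\nabla(X_F,\cdot)\cdot}$ and state only that it ``will require'' the second Bianchi identity and the symmetry of $\underline{A}$; that reassembly, with its coefficient tracking, is essentially the entire substantive content of the computation, and you have not carried it out. The linearizations you record ($\dot R=d^\nabla A$, $\dot P\,\dvol=\tr(\dot R\extwedge R^\nabla)\wedge\dsubsubvol$, the disposal of $\mu_0$) are correct, and the organization into a $\nabla^2 X_F$ half and an $R^\nabla(X_F,\cdot)\cdot$ half matches the decomposition of $\Lr_{X_F}\nabla$ recalled in the paper just before the theorem. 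What is missing is the actual execution of the curvature bookkeeping.
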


%\noindent In this paper, we will construct obstructions to the existence of zeroes of the moment map. Although there are K\"ahler manifolds for which the Levi-Civita connection provides zeroes of $\mu$, this is for example the case of closed K\"ahler manifolds with constant holomorphic sectionnal curvature.

\noindent In \cite{LLF}, the moment map $\mu$ is related to the notion of trace density for Fedosov's star products. Also, the closedness (closedness in the sense of Connes-Flato-Sternheimer \cite{CFS}) of Fedosov's star product implies $\mu=0$. Let us recall briefly all those notions and results.

A {\bf star product}, as defined in \cite{BFFLS}, on $(M,\omega)$ is a $\R[[\nu]]$-bilinear associative law on the space $C^{\infty}(M)[[\nu]]$ of formal power series
of smooth functions : 
\begin{equation} \nonumber 
*: (C^{\infty}(M)[[\nu]] )^2 \rightarrow C^{\infty}(M)[[\nu]]: (H,K)\mapsto H*K:=\sum_{r=0}^{\infty} \nu^r C_r(H,K) 
\end{equation}
where the $C_r$'s are bidifferential operators null on constants such that for all $H,\, K \in C^{\infty}(M)[[\nu]]$ : 
$C_0(H,K)=HK$ and
$C_1(H,K)-C_1(K,H)=\{H,K\}$.

In \cite{fed2}, Fedosov gave a geometric construction of star products on symplectic manifolds using a symplectic connection $\nabla$ and a formal series of closed $2$-forms $\Omega\in \nu \Omega^2(M)[[\nu]]$. We will only consider Fedosov's star products build with $\Omega=0$ and denote them by $*_{\nabla}$. 

Let $*$ be a star product on a symplectic manifold.
A {\bf trace} for $*$ is a $\R[[\nu]]$-linear map
$$\tr : C^{\infty}(M)[[\nu]] \rightarrow \R[[\nu]],$$
satisfying $\tr(F*H)=\tr(H*F)$ for all $F,H \in C^{\infty}(M)[[\nu]]$. 

Any star product $*$ on a symplectic manifold $(M,\omega)$ admit a trace \cite{fed3,NT,gr}. More precisely, there exists $\kappa \in C^{\infty}(M)[[\nu]]$ such that
\begin{equation*}
\tr(F):= \int_M F\kappa \dvol
\end{equation*}
for all $F \in C^{\infty}(M)[[\nu]]$. The function $\kappa$ is called a {\bf trace density}. Moreover, any two traces for $*$ differ from each other by multiplication with a formal constant $C \in \R[\nu^{-1},\nu]]$.

A star product is called {\bf closed} \cite{CFS} if the map $F\mapsto \int_M F \dvol$ satisfies the trace property:
\begin{equation*}
\int_M F*H \dvol = \int_M H*F \dvol, \  \textrm{ for all } F,H \in C^{\infty}(M)[[\nu]].
\end{equation*}

In \cite{LLF}, we linked the moment map with the trace density $\kappa^{\nabla}$ of the Fedosov's star product $*_{\nabla}$ by the formula :
$$\kappa^{\nabla}:= 1+\frac{\nu^2}{24}\mu(\nabla)+O(\nu^3).$$
So that, if $*_{\nabla}$ is closed, then $\mu(\nabla)=0$.

%%%%%%%%%%%%%%%%%%%%%%%%%%%%%%%%%%%%%%%%%%%%%%%%%%%%%%%%%%%%%%%%%%%%%%%%%%%%%%%%%%%%%%%%%%%%%%%%%%%%%%%%
%%%%%%%%%%%%%%%%%%%%%%%%%%%%%%%%%%%%%%%%%%%%%%%%%%%%%%%%%%%%%%%%%%%%%%%%%%%%%%%%%%%%%%%%%%%%%%%%%%%%%%%%

\section{Futaki invariant for $\mu$}

\subsection{Definition and main Theorem}

We consider a closed K\"ahler manifold $(M,\omega,J)$. Let $\Theta$ be the K\"ahler class of $\omega$ and denote
by $\MOm$ the set of K\"ahler forms in the class $\Theta$. By the classical $dd^c$-lemma,
$$\MOm:= \{\omega_{\phi}=\omega+dd^c\phi \textrm{ s.t. } \phi\in C^{\infty}_0(M),\ \omega_{\phi}(\cdot,J\cdot) \textrm{ is positive definite }\},$$
where $d^cF:=-dF\circ J$ for $F\in C^{\infty}(M)$.

Consider the functional 
\begin{equation*}
\omega_{\phi} \in \MOm \mapsto \mu^{\phi}(\nabla^{\phi})\in C^{\infty}(M),
\end{equation*}
where $\mu^{\phi}$ is the moment map on $\E(M,\omega_{\phi})$ and $\nabla^{\phi}$ is the Levi-Civita connection of $g_\phi(\cdot,\cdot):= \omega_{\phi}(\cdot,J\cdot)$. Using the second Bianchi identity, one can write :
$$\mu^{\phi}(\nabla^{\phi})=-\frac{1}{2}\Delta^{\phi} Scal^{\nabla^{\phi}}  + P(\nabla^{\phi})-\mu_0.$$
Note that $\mu_0$ does not depend on $\phi$ and that $\mu^{\phi}(\nabla^{\phi})$ is normalised with respect to the integral using the K\"ahler form $\omega_{\phi}$.

Let $\mathfrak{h}$ be the Lie algebra of all holomorphic vector fields having at least one zero on $M$. For any $Z\in \mathfrak{h}$ and $\omega_{\phi}\in \MOm$, there are unique $F^{\phi},H^{\phi} \in C^{\infty}(M)$ (depending on $\omega_{\phi}$) whose integral with respect to $\dvolphi$ is zero such that $Z=X^{\omega_{\phi}}_{F^{\phi}}+JX^{\omega_{\phi}}_{H^{\phi}}$ where $X^{\omega_{\phi}}_K$ denotes the Hamiltonian vector field of $K\in C^{\infty}(M)$ with respect to the symplectic form $\omega_{\phi}$. 

\begin{defi}
For $\omega_{\phi} \in \MOm$, we define the map
\begin{equation} \label{eq:Futakiinv}
\mathcal{F}^{\omega^{\phi}}:\mathfrak{h}(M) \mapsto \R: Z \mapsto \int_M H^{\phi}\mu^{\phi}(\nabla^{\phi}) \dvolphi,
\end{equation}
for $Z=X^{\omega_{\phi}}_{F^{\phi}}+JX^{\omega_{\phi}}_{H^{\phi}}$ as above.
\end{defi}

Though, the definition of $\mathcal{F}^{\omega_{\phi}}$ seems a priori to depend on the choice of a point in $\MOm$, we will prove it is not the case.

\begin{theoremprinc}
Let $(M,\omega,J)$ be a closed K\"ahler manifold with K\"ahler class $\Theta$ and Levi-Civita connection $\nabla$. Let $\mathfrak{h}$ be the Lie algebra of the reduced group of complex automorphisms of the K\"ahler manifold. Then, the map 
\begin{equation*}
\mathcal{F}^{\omega}:\mathfrak{h}(M) \rightarrow \R : Z \mapsto \int_M H\mu(\nabla)\dvol,
\end{equation*}
for $Z=X_F+JX_H$ and $\mu$ is the Cahen-Gutt moment map on $\E(M,\omega)$, is a character that does not depend on the choice of a K\"ahler form in the K\"ahler class $\Theta$.
\end{theoremprinc}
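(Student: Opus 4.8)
The plan is to prove first that $\mathcal{F}^{\omega}$ is independent of the chosen K\"ahler form in $\Theta$, and then to read off the character property from this together with the naturality of all the data. For the independence I would work along the linear paths $\omega_t=\omega+t\,dd^c\psi$ (which, by convexity of the admissible potentials, exhaust $\MOm$), so that after relabelling the base point it suffices to show $\frac{d}{dt}\big|_0\mathcal{F}^{\omega_t}(Z)=0$. The first move is a Moser transport: since $\dot\omega_t=dd^c\psi$ is exact there is an isotopy $\psi_t$ with $\psi_0=\mathrm{id}$ and $\psi_t^*\omega_t=\omega$, generated by $Y_t$ with $i(Y_t)\omega_t=-d^c\psi$. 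Because the moment map $\mu$, the Levi-Civita connection and the holomorphy potential are given by diffeomorphism-natural formulae, setting $\widetilde\nabla_t:=\psi_t^*\nabla^{\phi_t}\in\E(M,\omega)$ and $\widehat H_t:=H^{\phi_t}\circ\psi_t$ gives $\mu(\widetilde\nabla_t)=\mu^{\phi_t}(\nabla^{\phi_t})\circ\psi_t$, and a change of variables yields
\[
\mathcal{F}^{\omega_t}(Z)=\int_M \widehat H_t\,\mu(\widetilde\nabla_t)\,\dvol ,
\]
with the measure now independent of $t$.

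Differentiating at $t=0$ then gives two terms, $\int_M \dot H\,\mu(\nabla)\,\dvol+\int_M H\,\big(\frac{d}{dt}\big|_0\mu(\widetilde\nabla_t)\big)\,\dvol$, where $\dot H:=\frac{d}{dt}\big|_0\widehat H_t$ and $\nabla$, $H=H^0$ are the data at $\omega$. Here I would invoke Theorem \ref{theor:momentE}: since $\frac{d}{dt}\big|_0\mu(\widetilde\nabla_t)$ depends only on $A:=\frac{d}{dt}\big|_0\widetilde\nabla_t\in T_{\nabla}\E(M,\omega)$ (the moment map being a polynomial in the curvature of the connection and its covariant derivatives), the second term equals $\Omega^{\E}_{\nabla}\big((X_H)^{*\E},A\big)$, with $(X_H)^{*\E}=\Lr_{X_H}\nabla$ acting by $(V,W)\mapsto\nabla^2_{(V,W)}X_H+R^{\nabla}(X_H,V)W$. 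This neatly sidesteps linearising $\mu$ by hand and reduces everything to identifying $A$ and $\dot H$.

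For $A$ I would differentiate $\widetilde\nabla_t=\psi_t^*\nabla^{\phi_t}$, obtaining $A=\Lr_{Y_0}\nabla+\dot\nabla_0$, where $Y_0$ is the Moser field at $t=0$ --- one computes $Y_0=-JX_\psi$, an ``imaginary'', non-symplectic direction, so that neither summand of $A$ is tangent to $\E(M,\omega)$ even though their sum is --- and $\dot\nabla_0$ is the classical first variation of Levi-Civita connections, explicit in $\nabla d\psi$. Using the pointwise identity $(\Lr_Y\nabla)(V)W=\nabla^2_{(V,W)}Y+R^{\nabla}(Y,V)W$ (valid for any $Y$), $A$ becomes a concrete tensor in $\psi$ and $R^{\nabla}$; likewise the classical first-variation formula for a holomorphy potential expresses $\dot H$ through $Z$ and $\psi$. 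Substituting into the previous step, integrating by parts, and using the holomorphy of $Z$, namely $\Lr_Z J=0$, together with $\nabla\omega=0$ and $\nabla J=0$, should collapse the curvature contributions and force $\int_M \dot H\,\mu(\nabla)\,\dvol+\Omega^{\E}_{\nabla}\big((X_H)^{*\E},A\big)=0$. \textbf{The hard part will be precisely this bookkeeping} --- organising the curvature terms so that the two pieces cancel --- which is the Cahen--Gutt counterpart of the classical vanishing argument for the (higher) Futaki invariants; everything else is formal.

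Granting the independence, the character property follows quickly. Given $Z_1,Z_2\in\mathfrak{h}$, let $\varphi_s$ be the flow of $Z_2$; these are biholomorphisms isotopic to $\mathrm{id}$, so $\varphi_s^*\omega\in\MOm$ (since $\varphi_s^*\omega(\cdot,J\cdot)=\varphi_s^*g>0$ and $[\varphi_s^*\omega]=\Theta$). Naturality of all the ingredients gives the equivariance $\mathcal{F}^{\varphi_s^*\omega}(W)=\mathcal{F}^{\omega}\big((\varphi_s)_*W\big)$ for $W\in\mathfrak{h}$ (a change-of-variables check, using that $\varphi_s$ preserves $J$), while the independence just proved gives $\mathcal{F}^{\varphi_s^*\omega}(Z_1)=\mathcal{F}^{\omega}(Z_1)$. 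Hence $\mathcal{F}^{\omega}\big((\varphi_s)_*Z_1\big)=\mathcal{F}^{\omega}(Z_1)$ for all $s$; differentiating at $s=0$ and using linearity of $\mathcal{F}^{\omega}$ together with $\frac{d}{ds}\big|_0(\varphi_s)_*Z_1=-[Z_2,Z_1]$ yields $\mathcal{F}^{\omega}([Z_1,Z_2])=0$. Thus $\mathcal{F}^{\omega}$ is a Lie algebra character, and by the earlier steps it does not depend on the representative of $\Theta$.
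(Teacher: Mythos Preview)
Your overall architecture matches the paper's: Moser transport to pull everything back to a fixed symplectic form, rewriting $\mathcal{F}^{\omega_{\phi(t)}}(Z)$ as an integral against $\mu(\widetilde\nabla_t)\,\omega^n/n!$, differentiating to get a ``variation of $H$'' term plus a moment-map term $\Omega^{\E}_{\nabla}\bigl(\Lr_{X_H}\nabla,A\bigr)$, and deducing the character property from independence plus naturality. The paper also obtains $\dot H=X_F(\dot\phi)$ exactly as you anticipate.

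Where your proposal and the paper genuinely diverge is in how the second term is handled, and this is where your sketch has a real gap. You decompose $A=\Lr_{Y_0}\nabla+\dot\nabla_0$ with $Y_0=-JX_\psi$ and then propose to expand everything in curvature tensors, integrate by parts, and hope for cancellation. You yourself flag this as ``the hard part'' and do not carry it out. The paper avoids this brute force entirely by a single structural lemma: writing $A=\LC_{*J}\bigl(-J\Lr_{X_{\dot\phi}}J\bigr)$ and invoking the $I$-invariance
\[
(\LC^*\Omega^{\E})_J(JA,JA')=(\LC^*\Omega^{\E})_J(A,A')
\]
(Lemma~\ref{lemme:Iinv}), together with the equivariance of $\LC$, to convert
\[
\Omega^{\E}\bigl(\Lr_{X_H}\nabla,\ \LC_{*J}(-J\Lr_{X_{\dot\phi}}J)\bigr)
=\Omega^{\E}\bigl(\LC_{*J}(J\Lr_{X_H}J),\ \LC_{*J}(\Lr_{X_{\dot\phi}}J)\bigr).
\]
Holomorphy of $Z$ then enters in one line, $J\Lr_{X_H}J=-\Lr_{X_F}J$, turning the right-hand side into $-\Omega^{\E}\bigl(\Lr_{X_F}\nabla,\Lr_{X_{\dot\phi}}\nabla\bigr)$, which the moment-map property immediately identifies with $\int_M X_{\dot\phi}(F)\,\mu(\nabla)\,\omega^n/n!$. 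The two terms then cancel because $X_F(\dot\phi)+X_{\dot\phi}(F)=0$.

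So the missing idea in your proposal is precisely this $I$-invariance of $\LC^*\Omega^{\E}$: it is what lets you trade the non-symplectic direction $-JX_{\dot\phi}$ for the Hamiltonian one $X_{\dot\phi}$ at the cost of swapping $H$ for $F$, after which the moment-map identity applies a second time and closes the computation. Without it you are left with a tensor calculation that, if pushed through, would essentially rederive that lemma. Your treatment of the character property via $\mathcal{F}^{\varphi_s^*\omega}=\mathcal{F}^{\omega}$ and naturality is the same as the paper's.
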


The Theorem \ref{theorprinc:Futinvdef} implies that the non-vanishing of $\mathcal{F}^{\omega}$ is an obstruction to the existence of $\omega_{\phi}\in \MOm$ such that $\mu^{\phi}(\nabla^{\phi})=0$. 

\begin{proof}[Proof of Corollary \ref{cor:corprinc}]
For $\widetilde{\omega}\in \MOm$ with Levi-Civita connection $\widetilde{\nabla}$, assume the Fedosov's star product $*_{\widetilde{\nabla}}$ is closed. Then $\mu^{\widetilde{\omega}}(\widetilde{\nabla})=0$ and hence $\mathcal{F}^{\omega}=0$. It concludes the proof.
\end{proof}

%%%%%%%%%%%%%%%%%%%%%%%%%%%%%%%%%%%%%%%%%%%%%%%%%%%%%%%%%%%%%%%%%%%%%%%%%%%%%%%%%%%%%%%%%%%%%%%%%%%%%%%%
%%%%%%%%%%%%%%%%%%%%%%%%%%%%%%%%%%%%%%%%%%%%%%%%%%%%%%%%%%%%%%%%%%%%%%%%%%%%%%%%%%%%%%%%%%%%%%%%%%%%%%%%

\subsection{The space $\J_{int}(M,\omega)$}

%%%%%%%%%%%%%%%%%%%%%%%%%%%%%%%%%%%%%%%%%%%%%%%%%%%%%%%%%%%%%%%%%%%%%%%%%%%%%%%%%%%%%%%%%%%%%%%%%%%%%%%%

The goal of this subsection is to state the necessary formulas coming from \cite{LLF} in order to prove Theorem \ref{theorprinc:Futinvdef}.

\begin{defi}
We denote by $\J_{int}(M,\omega)$ the space of integrable complex structures on $M$ compatible with $\omega$, that is $J\in \J_{int}(M,\omega)$ is a complex structure such that $\omega(J.,J.)=\omega(.,.)$ and $\omega(.,J.)$ is a Riemannian metric.
\end{defi}

For $J_t\in \J_{int}(M,\omega)$ a smooth path and $A:=\frac{d}{dt}|_0J_t\in T_J \J_{int}(M,\omega)$. Then, $A$ is a section of the bundle $End(TM)$ satisfying $ AJ_0+J_0A=0$ and the $2$-tensor
$$J(\nabla A (X)Y)-(\nabla A)(JX)Y$$
is symmetric in $X, Y$.

Consider the map 
$$\LC:\J_{int}(M,\omega) \rightarrow \E(M,\omega): J \mapsto \nabla^{J}$$
which associates to an integrable complex structure $J$ compatible with $\omega$, the Levi-Civita 
connection $\nabla^{J}$ of the K\"ahler metric $g_J(.,.):= \omega(.,J.)$. 

The map $\LC$ is equivariant with respect to the group of symplectic diffeomorphisms of $(M,\omega)$.
That is : for all $\varphi \in \symp(M,\omega)$ and $J\in \J_{int}(M,\omega)$ with $\varphi.J:=\varphi_*J\varphi^{-1}_*$:
$$\LC(\varphi.J)=\varphi.\LC(J) .$$

\begin{prop} \label{prop:LC*}
Let $A\in T_J \J_{int}(M,\omega)$ and write $B\in T_{\nabla}\E(M,\omega)$ such that $B=\LC_{*J}(A)$.
Then $B$ is the unique solution to the equation
\begin{equation*} \label{eq:BJlin}
B(X)Y+JB(X)JY=- \nabla J A(X)Y.
\end{equation*}
and if $JA \in T_J \J_{int}(M,\omega)$, then :
$$\LC_{*J}(JA)(X)Y=JB(JX)JY +\frac{1}{2}\left(J(\nabla A) (JX)Y)+(\nabla A)(X)Y\right).$$
\end{prop}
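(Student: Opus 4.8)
The plan is to use the characterization of $\nabla^{J}=\LC(J)$ as the unique torsion-free connection with $\nabla^{J}\omega=0$ and $\nabla^{J}J=0$ (all three hold because $(M,\omega,J)$ is K\"ahler, the last being exactly the K\"ahler condition) and then to differentiate the defining identity $\nabla^{J}J=0$ along a path. First I would fix a path $J_t\in\J_{int}(M,\omega)$ with $J_0=J$, $\dot J_0=A$, set $\nabla^t:=\LC(J_t)$ and $B:=\frac{d}{dt}|_0\nabla^t$. Since each $\nabla^t$ is a symplectic connection their difference is tensorial, so $B\in T_{\nabla}\E(M,\omega)=\Gamma(S^3T^*M)$; equivalently $\underline{B}(X,Y,Z)=\omega(B(X)Y,Z)$ is completely symmetric (its symmetry in the first two slots encodes torsion-freeness, in the last two $\nabla^t\omega=0$). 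Differentiating $(\nabla^t_XJ_t)Y=\nabla^t_X(J_tY)-J_t(\nabla^t_XY)=0$ at $t=0$ and using $(\nabla_XA)Y=\nabla_X(AY)-A(\nabla_XY)$ yields
$$(\nabla_XA)Y+B(X)JY-JB(X)Y=0.$$
Applying $J$ and using $J^2=-\mathrm{Id}$ gives
$$B(X)Y+JB(X)JY=-J(\nabla_XA)Y,$$
which is the asserted equation once $\nabla JA(X)Y$ is read as $J(\nabla_XA)Y=\nabla_X(JA)Y$ (the two agree since $\nabla J=0$).

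For uniqueness within $\Gamma(S^3T^*M)$, suppose $D\in\Gamma(S^3T^*M)$ satisfies $D(X)Y+JD(X)JY=0$; this says each $D(X)$ commutes with $J$, i.e. $\underline{D}(X,JY,Z)=-\underline{D}(X,Y,JZ)$. Writing $g(X,Y,Z):=\underline{D}(JX,Y,Z)$, complete symmetry of $\underline{D}$ turns this into $g(a,b,c)=-g(c,b,a)$, while symmetry of $\underline{D}$ in the last two slots gives $g(a,b,c)=g(a,c,b)$. Chaining these two relations forces $g\equiv0$, hence $\underline{D}=0$ and $D=0$. Thus $B$ is the unique solution, which proves the first statement.

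For the second statement I would apply the first part to $A':=JA\in T_J\J_{int}(M,\omega)$ and write $B':=\LC_{*J}(JA)$. Because $\nabla J=0$ gives $\nabla(JA)=J\nabla A$, the defining equation for $B'$ becomes $B'(X)Y+JB'(X)JY=-J^2(\nabla_XA)Y=(\nabla_XA)Y$. I would then check that the proposed expression
$$\widetilde{B}(X)Y:=JB(JX)JY+\tfrac12\big(J(\nabla_{JX}A)Y+(\nabla_XA)Y\big)$$
solves this same equation: expanding $\widetilde{B}(X)Y+J\widetilde{B}(X)JY$ and using the anticommutation $(\nabla_UA)(JV)=-J(\nabla_UA)V$ (obtained by covariantly differentiating $AJ+JA=0$ with $\nabla J=0$) together with the first equation evaluated at $JX$, all terms collapse to $(\nabla_XA)Y$.

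Since $B'$ is already known to lie in $\Gamma(S^3T^*M)$, it then suffices to verify that $\widetilde{B}$ does too, after which uniqueness gives $B'=\widetilde{B}$. The main obstacle is exactly this last point: establishing complete symmetry of $\underline{\widetilde{B}}(X,Y,Z)=\omega(\widetilde{B}(X)Y,Z)$. This is where the tangency conditions must enter, namely that $J(\nabla_XA)Y-(\nabla_{JX}A)Y$, and its analogue for $JA$, are symmetric in $X,Y$; combined with the complete symmetry of $\underline{B}$ these identities should symmetrize the three terms defining $\underline{\widetilde{B}}$, and carrying this out is the computational heart of the argument.
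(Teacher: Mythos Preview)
The paper does not prove Proposition~\ref{prop:LC*}; it is quoted from \cite{LLF} as part of ``the necessary formulas coming from \cite{LLF}'' and stated without argument. So there is no in-paper proof to compare against, and your outline is already more detailed than what the paper offers.

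On the substance: your derivation of the first equation by differentiating $\nabla^{t}J_t=0$ is correct, and your uniqueness argument---reducing to $g(a,b,c):=\underline{D}(Ja,b,c)$ with $g(a,b,c)=g(a,c,b)$ and $g(a,b,c)=-g(c,b,a)$, then chaining to $g=0$---is clean and valid. For the second formula, your check that $\widetilde B(X)Y:=JB(JX)JY+\tfrac12\big(J(\nabla_{JX}A)Y+(\nabla_XA)Y\big)$ satisfies $\widetilde B(X)Y+J\widetilde B(X)JY=(\nabla_XA)Y$ is correct.

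The one point you flag as unfinished is exactly the crux: showing $\underline{\widetilde B}$ is completely symmetric so that uniqueness applies. Two remarks that may help you close it. First, symmetry in $(Y,Z)$ is automatic: since $\omega(\cdot,A\cdot)$ is symmetric one has $\nabla_XA\in\mathfrak{sp}(T_xM,\omega)$, whence $\omega((\nabla_XA)Y,Z)=\omega((\nabla_XA)Z,Y)$; the same holds for $J(\nabla_{JX}A)$ using $(\nabla_UA)J=-J(\nabla_UA)$, and the term $-\underline{B}(JX,JY,JZ)$ is fully symmetric already. Second, symmetry in $(X,Y)$ is precisely where the integrability hypotheses on $A$ and on $JA$ (the symmetry in $X,Y$ of $J(\nabla_XA)Y-(\nabla_{JX}A)Y$ and of its $JA$-analogue) must be invoked; without them the formula is genuinely false, so this step is not a formality. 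Your plan to use both tangency conditions together with the full symmetry of $\underline{B}$ is the right one, and carrying it out is a direct (if slightly tedious) computation.
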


\noindent From those equations we obtain \cite{LLF}:

\begin{lemme} \label{lemme:Iinv}
If $A, A'$ and $JA, JA'\in T_J\J_{int}(M,\omega)$ then
$$(lc^*\Omega^{\E})_J(JA,JA')=(lc^*\Omega^{\E})_J(A,A').$$
\end{lemme}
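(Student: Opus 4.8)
# Proof Proposal for Lemma \ref{lemme:Iinv}

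The plan is to exploit the defining equation for $B = \LC_{*J}(A)$ from Proposition \ref{prop:LC*} together with the compatibility relations between $A$, $JA$, and $J$, reducing the statement to an algebraic identity for the symplectic pairing $\Omega^{\E}$ written in terms of the symmetric $3$-tensors associated to the connection variations. First I would set $B := \LC_{*J}(A)$ and $B' := \LC_{*J}(A')$, and denote by $C := \LC_{*J}(JA)$ and $C' := \LC_{*J}(JA')$ the pushforwards of the $J$-rotated variations. The assumption $A, A', JA, JA' \in T_J\J_{int}(M,\omega)$ guarantees all four pushforwards are defined, and Proposition \ref{prop:LC*} gives the explicit formula
\begin{equation*}
C(X)Y = JB(JX)JY + \tfrac{1}{2}\left(J(\nabla A)(JX)Y + (\nabla A)(X)Y\right),
\end{equation*}
and the analogous expression for $C'$ in terms of $B'$ and $A'$. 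The goal then becomes showing $\Omega^{\E}_{\nabla}(C, C') = \Omega^{\E}_{\nabla}(B, B')$, since by definition $(lc^*\Omega^{\E})_J(JA, JA') = \Omega^{\E}_{\nabla}(C, C')$ and $(lc^*\Omega^{\E})_J(A, A') = \Omega^{\E}_{\nabla}(B, B')$.

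Next I would substitute the formula for $C$ and $C'$ into the coordinate expression $\Omega^{\E}_{\nabla}(C,C') = -\int_M \Lambda^{kl}\tr(C(e_k)C'(e_l))\dvol$. Expanding the product $C(e_k)C'(e_l)$ produces a sum of four groups of terms: a leading term $JB(Je_k)J \cdot JB'(Je_l)J$, two cross terms mixing $B$ (respectively $B'$) with the derivative-of-$A'$ (respectively derivative-of-$A$) pieces, and a term purely in the $\nabla A$, $\nabla A'$ tensors. The leading term should reproduce $\tr(B(e_k)B'(e_l))$ after using that $J$ is $\omega$-compatible, that $B(X)$ anticommutes appropriately with $J$ (which follows from the defining equation $B(X)Y + JB(X)JY = -\nabla_J A(X)Y$ read in the Kähler setting), and that the change of frame $e_k \mapsto Je_k$ interacts with $\Lambda^{kl}$ in a controlled way since $J$ preserves $\omega$. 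The substance of the argument is checking that after taking the trace and contracting against $\Lambda^{kl}$, the cross terms and the pure-derivative term either cancel pairwise or integrate to zero.

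The hard part will be showing that the unwanted terms vanish after integration. The two cross terms are expected to cancel against each other by an integration-by-parts argument combined with the symmetry of the symplectic trace pairing, and the pure-derivative term $\Lambda^{kl}\tr\bigl((\nabla A)\text{-piece} \cdot (\nabla A')\text{-piece}\bigr)$ should vanish because the symmetry property in $X,Y$ recorded in the excerpt (namely that $J(\nabla A(X)Y) - (\nabla A)(JX)Y$ is symmetric) forces the relevant skew contraction against $\Lambda^{kl}$ to be zero. I would handle these by converting every expression into the symmetric $3$-tensors $\underline{A}, \underline{A'}$ via $\underline{A}(\cdot,\cdot,\cdot) = \omega(A(\cdot)\cdot,\cdot)$, so that all manipulations become tensorial contractions of completely symmetric objects against the totally skew $\Lambda^{i_1j_1}\Lambda^{i_2j_2}\Lambda^{i_3j_3}$, where parity mismatches between the Kähler-type symmetries and the Poisson tensor make the cancellations transparent. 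The main obstacle is organizing this bookkeeping cleanly; once the terms are expressed in the $\underline{A}$-formalism and the $J$-compatibility of $\omega$ is used to move all factors of $J$ onto the metric, the vanishing of the extraneous terms should follow from the symmetry assumptions on $A$ and $A'$ guaranteed by $A, JA \in T_J\J_{int}(M,\omega)$.
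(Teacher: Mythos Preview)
Your strategy matches what the paper indicates: the lemma is stated as a consequence of the two formulas in Proposition~\ref{prop:LC*}, with the detailed computation deferred to \cite{LLF}. Setting $C := \LC_{*J}(JA) = P + Q$ with $P(X)Y = JB(JX)JY$ and $Q$ the $\nabla A$--piece, and then verifying $\Omega^{\E}(P,P') = \Omega^{\E}(B,B')$ via the $\omega$-compatibility of $J$ and cyclicity of the trace, is exactly the right opening move, and your reasoning for that step is correct.

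The gap is in your treatment of the remaining three terms. You offer two heuristics---that the cross terms cancel ``by an integration-by-parts argument'' and that the $Q$--$Q'$ term vanishes because the symmetry of $J(\nabla A(X)Y) - (\nabla A)(JX)Y$ ``forces the relevant skew contraction against $\Lambda^{kl}$ to be zero''---and neither is right as stated. The symmetry you cite is a condition on a \emph{single} tensor in its $(X,Y)$ slots; it does not by itself control the contraction $\Lambda^{kl}\tr(Q(e_k)Q'(e_l))$, which pairs the first slots of two \emph{different} tensors. And there is no reason to expect integration by parts: on a K\"ahler manifold the defining equation reads $B(X)Y + JB(X)JY = -J(\nabla_X A)Y$, so $B$ is already algebraic in $\nabla A$; all four blocks sit at the same differential order and the needed cancellation is pointwise-algebraic, not integral. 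To close the argument you should feed that defining equation back in so that $P$, $Q$, $P'$, $Q'$ are all written uniformly in $\nabla A$ and $\nabla A'$, and then use the anti-commutation $AJ + JA = 0$ together with the integrability condition to see the cancellation directly. The outline is sound, but the mechanism you propose for the hard step would not actually deliver the result.
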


%%%%%%%%%%%%%%%%%%%%%%%%%%%%%%%%%%%%%%%%%%%%%%%%%%%%%%%%%%%%%%%%%%%%%%%%%%%%%%%%%%%%%%%%%%%%%%%%%%%%%%%%
%%%%%%%%%%%%%%%%%%%%%%%%%%%%%%%%%%%%%%%%%%%%%%%%%%%%%%%%%%%%%%%%%%%%%%%%%%%%%%%%%%%%%%%%%%%%%%%%%%%%%%%%

\subsection{Proof of Theorem \ref{theorprinc:Futinvdef}} \label{sect:cal}

We will prove Theorem \ref{theorprinc:Futinvdef} in this section. For this, consider a smooth one-parameter family $\phi:\ ]-\epsilon,\epsilon[\rightarrow C^{\infty}_0(M):t\mapsto \phi(t)$ for some $\epsilon\in \R^+_0$ such that the $2$-form $\omega_{\phi(t)}:= \omega+ dd^c\phi(t)$ is a smooth path in $\MOm$ passing through $\omega$. To prove the independence of $\mathcal{F}^{\omega^{\phi}}$, we will show that for all $Z\in \mathfrak{h}(M)$ :
\begin{equation*}
\frac{d}{dt}|_{0}\mathcal{F}^{\omega^{\phi(t)}}(Z)=0.
\end{equation*}

All the forms $\omega_{\phi(t)}$ are symplectomorphic to each other. Indeed, set $X_t:=-\grad^{\phi(t)}(\dot{\phi})$ the gradient vector field of $\dot{\phi}(t)$ with respect to $g_{\phi(t)}$ (that is $g_{\phi(t)}(\grad^{\phi(t)}(\dot{\phi}),\cdot)=d\dot{\phi}$). Then the one parameter family of diffeomorphisms $f_t$ integrating the time-dependent vector field $X_t$ satisfies
\begin{equation} \label{eq:Moser}
f_t^*\omega_{\phi(t)}=\omega.
\end{equation}

Consider $f_t$ as in the above equation (\ref{eq:Moser}). Then, the natural action of $f_t^{-1}$ on $J$ produces a path
$$J_t:=f_t^{-1}.J:= f_{t*}^{-1} J f_{t*} \in \J_{int}(M,\omega).$$
Define the associated K\"ahler metric $g_{J_t}(\cdot,\cdot):=\omega(\cdot,J_t\cdot)$ and denote by $\nabla^{J_t}$ its
Levi-Civita connection. Then, $\nabla^{J_t}$ and $\nabla^{\phi(t)}$ are related by the following formula : 
$$\nabla^{J_t}=f_t^{-1}.\nabla^{\phi(t)},$$
where $(f_t^{-1}.\nabla^{\phi(t)})_YZ = f_{t*}^{-1}\nabla^{\phi(t)}_{f_{t*}Y}f_{t*}Z$. Then, their image by the moment map is related by :
\begin{equation} \label{eq:muJmuphi}
\mu(\nabla^{J_t})=f_t^*\mu^{\phi(t)}(\nabla^{\phi(t)}).
\end{equation}
Note that on the LHS the moment map is taken with respect to a fixed sympelctic form while on the RHS $\mu^{\phi(t)}$ is a function on $\E(M,\omega_{\phi(t)})$.

\begin{proof}[Proof of Theorem \ref{theorprinc:Futinvdef}]
We will use the notations introduced above. First, using Equations (\ref{eq:Futakiinv}), (\ref{eq:Moser}) and (\ref{eq:muJmuphi}), we have :
$$\mathcal{F}^{\omega^{\phi(t)}}(Z)=\int_M H^{\phi(t)}\mu^{\phi(t)}(\nabla^{\phi(t)}) \frac{\omega_{\phi(t)}^n}{n!}=\int_M f_t^*(H^{\phi(t)})\mu(\nabla^{J_t})\dvol.$$
We will differentiate at $t=0$. We will write $H$ for $H^{\phi(0)}$ :
$$\frac{d}{dt}|_0 f_t^*(H^{\phi(t)}) = X_0(H)+\frac{d}{dt}|_0 H^{\phi(t)}=X_0(H)+Z(\dot{\phi}(0))=X_F(\dot{\phi}(0)).$$

Using the fact that $\frac{d}{dt}|_0J_t=\Lr_{X_0} J=-\Lr_{JX_{\dot{\phi}(0)}}J=-J\Lr_{X_{\dot{\phi}(0)}}J$, we compute
\begin{eqnarray*}
 \frac{d}{dt}|_0\mathcal{F}^{\omega^{\phi(t)}}(Z) & = & \int_M X_F(\dot{\phi}(0))\mu(\nabla) \dvol + \frac{d}{dt}|_0 \int_M H \mu(\nabla^{J_t})\dvol, \\
 & = & \int_M X_F(\dot{\phi}(0))\mu(\nabla) \dvol + \Omega(\Lr_{X_H}\nabla, \LC_{*J}(-J\Lr_{X_{\dot{\phi}(0)}}J)).
\end{eqnarray*}
Using the equivariance of the map $\LC$ and Lemma \ref{lemme:Iinv}, we have
\begin{eqnarray*}
 \Omega(\Lr_{X_H}\nabla, \LC_{*J}(-J\Lr_{X_{\dot{\phi}(0)}}J)) & = & \Omega(\LC_{*J}(\Lr_{X_H}J),\LC_{*J}(-J\Lr_{X_{\dot{\phi}(0)}}J)),\\
& = & \Omega(\LC_{*J}(J\Lr_{X_H}J),\LC_{*J}(\Lr_{X_{\dot{\phi}(0)}}J)).
\end{eqnarray*}
Finally, as $Z$ is holomorphic, $J\Lr_{X_H}J=-\Lr_{X_F}J$, so that :
\begin{eqnarray*}
 \frac{d}{dt}|_0\mathcal{F}^{\omega^{\phi(t)}}(Z) & = & \int_M X_F(\dot{\phi}(0))\mu(\nabla) \dvol - \Omega(\LC_{*J}(\Lr_{X_F}J),\LC_{*J}(\Lr_{X_{\dot{\phi}(0)}}J)),\\
 & = & \int_M X_F(\dot{\phi}(0))\mu(\nabla) \dvol+ \int_M \mu(\nabla) X_{\dot{\phi}(0)}(F)\dvol =0.
\end{eqnarray*}

The fact that $\mathcal{F}$ is a character, is a consequence of the above computations. Indeed, for $Y,Z \in \mathfrak{h}(M)$, one has $[Y,Z]=\frac{d}{dt}|_0\varphi^Y_{-t*}Z$, for $\varphi^Y_{t*}$ the flow of $Y$. Then, when $Z=X^{\omega}_F+JX^{\omega}_H$, one computes $\varphi^Y_{-t*}Z=X^{\varphi^{Y*}_{t}\omega}_{\varphi^{Y*}_{t}F}+JX^{\varphi^{Y*}_{t}\omega}_{\varphi^{Y*}_{t}H}$. Finally, one has
$$\mathcal{F}^{\omega}([Y,Z])=\frac{d}{dt}|_0\mathcal{F}^{\omega}(\varphi^Y_{-t*}Z) = \frac{d}{dt}|_0\mathcal{F}^{\varphi_t^{Y*}\omega}(\varphi^Y_{-t*}Z)= \frac{d}{dt}|_0\mathcal{F}^{\omega}(Z)=0$$
\end{proof}

%%%%%%%%%%%%%%%%%%%%%%%%%%%%%%%%%%%%%%%%%%%%%%%%%%%%%%%%%%%%%%%%%%%%%%%%%%%%%%%%%%%%%%%%%%%%%%%%%%%%%%%%

\section{Generalised Futaki invariants}

\subsection{$\mathcal{F}^{\omega}$ is a generalised Futaki invariant}

In \cite{Fut}, Futaki generalised the Futaki invariant obstructing the existence of K\"ahler-Einstein metrics. One of these so-called generalised Futaki invariants is the invariant we define using the moment map. 

Futaki's construction goes as follows. On a K\"ahler manifold $(M,\omega,J)$, consider the holomorphic bundle $T^{(1,0)}M$ of tangent vectors of type $(1,0)$. Choose any $(1,0)$-connection $\overline{\nabla}$ on $T^{(1,0)}M$ with curvature $R^{\overline{\nabla}}$. For $Z\in \h(M)$, define $L(Z^{(1,0)}):=\overline{\nabla}_{Z^{(1,0)}}-\Lr_{Z^{(1,0)}}$, it is a section of the bundle $End(T^{(1,0)}M)$. Let $q$ be a $Gl(n,\C)$-invariant polynomials on $\mathfrak{gl}(n,\C)$ of degree $p$, Futaki defined in \cite{Fut}, the map $F_q:\h\rightarrow \C$ by
\begin{equation*} \label{eq:Futakigendef}
F_q(Z):= (n-p+1) \int_M u_Z q(R^{\overline{\nabla}})\wedge \omega^{(n-p)}+\int_M q(L(Z^{(1,0)})+R^{\overline{\nabla}})\wedge \omega^{(n-p+1)},
\end{equation*}
where $u_Z$ is the complex valued function defined by $i(Z^{(1,0)})\omega=\overline{\partial} u_Z$.

Futaki shows that $F_q$ depends neither on the choice of the ${(1,0)}$-connection nor on the choice of the K\"ahler form in $\MOm$, see \cite{Fut}. Moreover, if you take $q=c_k$ the polynomials defining the $k$-th Chern form, it is proved in \cite{Fut} that one recovers Bando's obstruction to the harmonicity of the k$^{th}$ Chern form:
\begin{equation} \label{eq:Fc2}
F_{c_k}(Z)=(n-k+1) \int_M u_Z c_k(R^{\nabla})\wedge \omega^{(n-k)}.
\end{equation}

\begin{prop} \label{prop:FequalF}
We have that $\mathcal{F}^{\omega}$ is the imaginary part of $F_{\frac{8\pi^2}{(n-1)!}(c_2-\frac{1}{2}c_1. c_1)}$
\end{prop}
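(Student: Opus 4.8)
\textbf{Proof plan for Proposition \ref{prop:FequalF}.}

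The plan is to match the two quantities term by term using the explicit identification of the Cahen-Gutt moment map $\mu(\nabla)$ with curvature invariants of the K\"ahler metric, and the fact that on a K\"ahler manifold the $(1,0)$-connection can be taken to be the Chern connection, whose curvature is the Levi-Civita curvature. First I would recall that for the Levi-Civita/Chern connection $\nabla$ of the K\"ahler metric, by the second Bianchi identity (as already used in the excerpt) the moment map reads $\mu(\nabla) = -\tfrac12 \Delta Scal^{\nabla} + P(\nabla) - \mu_0$, and that the characteristic-form term $P(\nabla)\dvol = \tfrac12 \tr(R^{\nabla}\extwedge R^{\nabla})\wedge \dsubsubvol$ is, up to universal constants and Chern-Weil normalisation, a combination of $c_2(R^\nabla)\wedge \omega^{n-2}$ and $c_1(R^\nabla)^2 \wedge \omega^{n-2}$; similarly $\Delta Scal^\nabla$ and the constant $\mu_0$ are accounted for by the lower-order ($\omega^{n-1}$, $\omega^n$) terms produced when one expands $F_q(Z)$ for $q$ the relevant degree-$2$ invariant polynomial. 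Concretely I would take $q = \tfrac{8\pi^2}{(n-1)!}\,(c_2 - \tfrac12 c_1\cdot c_1)$, so $p=2$, and write out $F_q(Z)$ from Futaki's formula with $\overline{\nabla}$ the Chern connection.

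Next I would exploit the structure of Futaki's formula. For $q = c_2 - \tfrac12 c_1^2$ one checks (this is classical, and the combination is chosen precisely for this) that $q(L(Z^{(1,0)}) + R^{\overline\nabla})$ contributes, after integration and use of $i(Z^{(1,0)})\omega = \overline\partial u_Z$ together with Stokes, exactly the ``potential'' terms $u_Z \,(\text{div}-\text{type operators applied to curvature})$; the polarised bilinear term in $q$ pairs $L(Z^{(1,0)})$ against $R^{\overline\nabla}$, and since $L(Z^{(1,0)})$ is essentially $\overline\partial(\text{of the holomorphy potential})$, an integration by parts turns the $\omega^{n-1}$-term into a Laplacian acting on $Scal$ (this is where the $-\tfrac12\Delta Scal^\nabla$ piece of $\mu(\nabla)$ comes from), while the top term $q(R^{\overline\nabla})\wedge\omega^{n-2}$ together with the $(n-p+1)\int u_Z q(R^{\overline\nabla})\wedge\omega^{n-p}$ term reproduces $\int_M H\,(P(\nabla) - \mu_0)\dvol$, with $H = -\Im u_Z$ (up to the normalisation by the integral of $u_Z$). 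Taking imaginary parts throughout — recalling $Z = X_F + JX_H$ so that $\Im u_Z$ is, up to sign and constant, $H$, and $\Re u_Z$ corresponds to $F$ — collapses $\Im F_q(Z)$ onto $\int_M H\mu(\nabla)\dvol = \mathcal{F}^\omega(Z)$.

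The technical core, and the step I expect to be the main obstacle, is the precise bookkeeping of constants and normalisations: Chern-Weil factors of $2\pi i$, the passage from $\omega^{n-p}/(n-p)!$ volume normalisations to the $\dvol, \dsubvol, \dsubsubvol$ conventions, the $\tfrac{1}{24}$ appearing in $\kappa^\nabla$, and matching $\tr(R^\nabla\extwedge R^\nabla)$ against $c_2$ and $c_1^2$ (which on a K\"ahler manifold involves $\tr R\wedge \tr R$ versus $\tr(R\wedge R)$ — exactly the $c_2 - \tfrac12 c_1^2$ combination). I would organise this by first establishing the pointwise identity $P(\nabla)\dvol = \frac{8\pi^2}{(n-1)!}(c_2 - \tfrac12 c_1\cdot c_1)(R^\nabla)\wedge \dsubsubvol + (\text{exact})$ as a lemma, then invoking the already-known relation $\kappa^\nabla = 1 + \tfrac{\nu^2}{24}\mu(\nabla) + O(\nu^3)$ only as a sanity check, and finally quoting Futaki's invariance statement \cite{Fut} so that no independent re-proof of well-definedness is needed — the independence of $\mathcal{F}^\omega$ from the K\"ahler form in Theorem \ref{theorprinc:Futinvdef} then also follows a posteriori from this identification, giving a consistency cross-check on the constant.
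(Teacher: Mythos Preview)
Your proposal follows essentially the same route as the paper's proof: identify the Pontryagin-type term $P(\nabla)$ with the Chern--Weil combination $c_2 - \tfrac12 c_1^2$ via the pointwise identity $\tr(R^{\nabla}\extwedge R^{\nabla})=16\pi^2(c_2-\tfrac12 c_1\cdot c_1)(R^{\nabla})$, and then recover the $\Delta Scal$ piece from the $L(Z^{(1,0)})$ contribution in Futaki's second integral. The paper's execution is more economical in two places. First, rather than treating $q=c_2-\tfrac12 c_1^2$ monolithically, it splits $F_q$ by linearity and invokes Futaki's already-proved simplification \eqref{eq:Fc2}, namely $F_{c_k}(Z)=(n-k+1)\int_M u_Z\,c_k(R^{\nabla})\wedge\omega^{n-k}$, so the $c_2$ part is dispatched in one line and only $F_{c_1\cdot c_1}$ requires the second integral to be unpacked. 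Second, instead of your ``integration by parts via $\overline\partial u_Z$'' to produce the Laplacian, the paper uses the direct pointwise identity $\tr^{\C}(L(Z^{(1,0)}))=-\tfrac{i}{2}(\Delta F+i\Delta H)$, which paired against $\rho^{\nabla}\wedge\dsubvol$ immediately yields $\tfrac12\int_M(F+iH)\Delta Scal^{\nabla}\,\dvol$. Your plan would work but re-derives these shortcuts; note also a small sign slip --- with the paper's convention $u_Z=F+iH$, one has $H=\Im u_Z$, not $-\Im u_Z$.
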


\begin{proof}
The key of the computation is that the Pontryagin $4$-form defining $P(\nabla)$ satisfies:
\begin{equation*}
\tr(R^{\nabla}\extwedge R^{\nabla})=16\pi^2(c_2-\frac{1}{2}c_1. c_1)(R^{\nabla}).
\end{equation*}
Then,
$$\mathcal{F}^{\omega}(Z)=-\frac{1}{2} \int_M H\Delta Scal \dvol +8\pi^2\int_M Hc_2(R^{\nabla})\wedge \dsubsubvol - 4\pi^2\int_M Hc_1.c_1(R^{\nabla}) \wedge \dsubsubvol.$$
As $u_z=F+iH$, Equation (\ref{eq:Fc2}) tells us that the imaginary part of $F_{\frac{8\pi^2}{(n-1)!}c_2}(Z)$ is:
$$8\pi^2\int_M Hc_2(R^{\nabla})\wedge \dsubsubvol.$$  It remains to compute de second term of $F_{\frac{4\pi^2}{(n-1)!}c_1c_1}$ :
\begin{eqnarray*}
F_{\frac{4\pi^2}{(n-1)!}c_1c_1}(Z)& = &4\pi^2\left(\int_M u_Zc_1.c_1(R^{\nabla})\wedge \dsubsubvol+\int_M c_1.c_1(L(Z^{(1,0)})+R^{\nabla})\wedge \dsubvol\right)\\
& = & 4\pi^2\int_M u_Zc_1.c_1(R^{\nabla})\wedge \dsubsubvol+ 2i\int_M \tr^{\C}(L(Z^{(1,0)}))\rho^{\nabla}\wedge \dsubvol.
\end{eqnarray*}
Since $\tr^{\C}(L(Z^{(1,0)}))= \frac{-i}{2}\left(\Delta F+i\Delta H\right)$, we have:
\begin{eqnarray*}
2i\int_M \tr^{\C}(L(Z^{(1,0)}))\rho^{\nabla}\wedge \dsubvol & = & \frac{1}{2}\int_M \left(\Delta F+i\Delta H\right)Scal^{\nabla} \dvol,\\
 & = & \frac{1}{2}\int_M \left( F+iH\right)\Delta Scal^{\nabla} \dvol.
\end{eqnarray*}
So, $\mathcal{F}^{\omega}$ is the imaginary part of $F_{\frac{8\pi^2}{(n-1)!}(c_2-\frac{1}{2}c_1. c_1)}$.
\end{proof}

\subsection{Example}

The computations of generalized Futaki invariants $F_q$ defined by (\ref{eq:Futakigendef}) is not an easy task. For $q=\mathrm{Td}_p$, the invariant polynomials defining the $p^{\mathrm{th}}$ Todd class, methods coming from algebraic geometry are developped to compute $F_{\mathrm{Td}_p}$, see \cite{VedZud,Ono}, in order to study the asymptotic semi-stability \cite{Fut} of the manifold. Those methods and this notion of asymptotic semi-stability are beyond the scope of this paper. However, when the manifold is K\"ahler-Einstein, as it is the case in \cite{Ono}, $F_{\mathrm{Td}_2}$ determines completely $\mathcal{F}^{\omega}$.

\begin{obs} \label{obs}
When $(M,\omega,J)$ is K\"ahler-Einstein, $\mathcal{F}^{\omega}$ is the imaginary part of $\frac{8\pi^2}{(n-1)!}F_{\mathrm{Td}_2}$.
\end{obs}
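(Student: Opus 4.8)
The plan is to reduce the statement to Proposition \ref{prop:FequalF}, which already identifies $\mathcal{F}^{\omega}$ with the imaginary part of $F_{\frac{8\pi^2}{(n-1)!}(c_2-\frac{1}{2}c_1.c_1)}$. So it suffices to show that, in the K\"ahler-Einstein case, the linear functional $F_{c_2-\frac{1}{2}c_1.c_1}$ on $\mathfrak{h}$ agrees with $F_{\mathrm{Td}_2}$. Recall the degree-$2$ Todd polynomial is $\mathrm{Td}_2=\frac{1}{12}(c_1^2+c_2)$, so we must prove $F_{c_2-\frac{1}{2}c_1.c_1}=F_{\frac{1}{12}(c_1^2+c_2)}$ up to the normalising constant, or rather that their difference vanishes on $\mathfrak{h}$. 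By linearity of $q\mapsto F_q$ this amounts to showing that $F_{c_1.c_1}$ is a fixed scalar multiple of $F_{c_2}$ when $(M,\omega,J)$ is K\"ahler-Einstein; once that proportionality constant is pinned down, a direct bookkeeping of the numerical coefficients in $\mathrm{Td}_2$ versus $c_2-\frac12 c_1^2$ finishes the identification with the stated factor $\frac{8\pi^2}{(n-1)!}$.

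First I would invoke the K\"ahler-Einstein condition $\rho^{\nabla}=\lambda\omega$ for some constant $\lambda$ (with $\lambda$ determined by the cohomology class, e.g. $\lambda = \frac{2\pi c_1.\Theta^{n-1}}{\Theta^n}$ up to normalisation). This has two consequences I would exploit. On the one hand, $c_1(R^{\nabla})=\frac{1}{2\pi}\rho^{\nabla}=\frac{\lambda}{2\pi}\omega$, so that $c_1.c_1(R^{\nabla})=\bigl(\frac{\lambda}{2\pi}\bigr)^2\omega^2$, which is a constant multiple of $\omega^2$; on the other hand, $\Delta Scal^{\nabla}=0$ since $Scal^{\nabla}$ is constant in the Einstein case. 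Feeding these into the formula for $F_{c_1.c_1}$ from the proof of Proposition \ref{prop:FequalF} collapses the second term (the one built from $\tr^{\C}(L(Z^{(1,0)}))$, which produced $\frac{1}{2}\int_M(F+iH)\Delta Scal^{\nabla}\,\dvol$), leaving only $\int_M u_Z c_1.c_1(R^{\nabla})\wedge\dsubsubvol = \bigl(\frac{\lambda}{2\pi}\bigr)^2\int_M u_Z\,\dvol$. But $u_Z$ is normalised so that $\int_M u_Z\,\dvol=0$ (equivalently $\int_M F\,\dvol=\int_M H\,\dvol=0$), hence $F_{c_1.c_1}(Z)=0$ for every $Z\in\mathfrak{h}$.

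With $F_{c_1.c_1}\equiv 0$ on $\mathfrak{h}$ in the Einstein case, one has $F_{c_2-\frac12 c_1.c_1}=F_{c_2}$ and $F_{\mathrm{Td}_2}=F_{\frac{1}{12}(c_1^2+c_2)}=\frac{1}{12}F_{c_2}$ as functionals on $\mathfrak{h}$. Combining with Proposition \ref{prop:FequalF}, $\mathcal{F}^{\omega}=\Im F_{\frac{8\pi^2}{(n-1)!}(c_2-\frac12 c_1.c_1)}=\Im F_{\frac{8\pi^2}{(n-1)!}c_2}=\Im\bigl(\frac{8\pi^2}{(n-1)!}\cdot 12\,F_{\mathrm{Td}_2}\bigr)$, and after checking whether the intended statement absorbs the factor $12$ into the constant (the cleanest reading is that Observation \ref{obs} states proportionality to $F_{\mathrm{Td}_2}$ with the displayed constant, with the harmless numerical factor suppressed), we are done. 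I expect the only delicate point to be the careful matching of normalising constants — the $12$ from $\mathrm{Td}_2$, the $2\pi$'s relating $\rho^{\nabla}$ to $c_1$, and the $(n-p+1)$ factors in Futaki's definition — rather than anything structural; the vanishing $\int_M u_Z\,\dvol=0$ is what does the real work.
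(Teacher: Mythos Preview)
Your approach is essentially the paper's: invoke Proposition~\ref{prop:FequalF}, then use the K\"ahler--Einstein condition $\rho=\lambda\omega$ to kill $F_{c_1.c_1}$ (both summands vanish, one because $c_1.c_1(R^\nabla)\propto\omega^2$ and $\int_M u_Z\,\dvol=0$, the other because $\Delta\,Scal^\nabla=0$), and conclude. The factor of $12$ you worry about is a convention issue: the paper takes $\mathrm{Td}_2=c_2+c_1.c_1$ rather than the standard $\frac{1}{12}(c_1^2+c_2)$, so with that convention the constants match on the nose and your hedging is unnecessary.
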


\begin{proof}
Recall that $\mathrm{Td}_2=c_2+c_1.c_1$. Now, because the Ricci form $\rho=\lambda \omega$, from the computations in the proof of Proposition \ref{prop:FequalF}, one has $F_{c_1.c_1}=0$. So that, $\frac{8\pi^2}{(n-1)!}F_{\mathrm{Td}_2}=F_{\frac{8\pi^2}{(n-1)!}(c_2-\frac{1}{2}c_1. c_1)}$ and its imaginary part is $\mathcal{F}^{\omega}$ by Proposition \ref{prop:FequalF}.
\end{proof}

In \cite{NP}, a 7-dimensional (complex dimension) smooth K\"ahler manifold $(V,\omega,J)$ is constructed, the so-called Nill-Paffenholz example. $V$ is a toric Fano manifold that is K\"ahler-Einstein, \cite{NP}. Moreover, Ono, Sano and Yotsutani \cite{Ono} showed that, on $V$, $F_{\mathrm{Td}_p} \ne 0$ for $2\leq p \leq 7$. Combined with the above Observation \ref{obs}, it means $\mathcal{F}^{\omega}\ne 0$. Consequently, Corollary \ref{cor:corprinc} implies:

\begin{theorem}
Let $(V,\omega,J)$ be the Nill-Paffenholz example \cite{NP} and $\Theta=[\omega]$, then there is no closed Fedosov's star products of the form $*_{\widetilde{\nabla}}$ for $\widetilde{\nabla}$ the Levi-Civita connection of some $\widetilde{\omega}\in \MOm$.
\end{theorem}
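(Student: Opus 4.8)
The plan is to chain together the results already established in the paper, since the final theorem is essentially a corollary of everything that precedes it. First I would invoke Observation~\ref{obs}: because the Nill--Paffenholz manifold $V$ is K\"ahler--Einstein, the character $\mathcal{F}^{\omega}$ equals the imaginary part of $\frac{8\pi^2}{(n-1)!}F_{\mathrm{Td}_2}$, where here $n=7$. Next I would quote the computation of Ono--Sano--Yotsutani \cite{Ono}, which gives $F_{\mathrm{Td}_p}\neq 0$ on $V$ for all $2\leq p\leq 7$; taking $p=2$ in particular shows $F_{\mathrm{Td}_2}\neq 0$, hence $\mathcal{F}^{\omega}\not\equiv 0$ on $\mathfrak{h}(V)$.

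Having established that $\mathcal{F}^{\omega}$ is not identically zero, I would then apply Corollary~\ref{cor:corprinc} verbatim: for any $\widetilde{\omega}\in\MOm$ with Levi-Civita connection $\widetilde{\nabla}$, if $*_{\widetilde{\nabla}}$ were closed then by the proof of that corollary we would have $\mu^{\widetilde{\omega}}(\widetilde{\nabla})=0$, forcing $\mathcal{F}^{\omega}\equiv 0$ (using Theorem~\ref{theorprinc:Futinvdef}, which makes $\mathcal{F}^{\omega}$ independent of the choice of K\"ahler form in $\Theta$), a contradiction. Therefore no such closed Fedosov star product exists.

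The only genuine subtlety worth flagging is the bookkeeping around the K\"ahler--Einstein normalization in Observation~\ref{obs}: one must be sure that the value of $F_{\mathrm{Td}_2}$ used from \cite{Ono} is computed with respect to the same K\"ahler class $\Theta=[\omega]$ for which $V$ is K\"ahler--Einstein, so that the identity $F_{c_1\cdot c_1}=0$ (coming from $\rho=\lambda\omega$) genuinely applies and $\frac{8\pi^2}{(n-1)!}F_{\mathrm{Td}_2}=F_{\frac{8\pi^2}{(n-1)!}(c_2-\frac12 c_1\cdot c_1)}$. Once that alignment is checked, there is nothing further to prove: the statement is a direct concatenation of Observation~\ref{obs}, the non-vanishing result of \cite{Ono}, and Corollary~\ref{cor:corprinc}. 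I do not expect any real obstacle here — the mathematical content has all been front-loaded into the earlier sections, and the final theorem is just the punchline that assembles them.
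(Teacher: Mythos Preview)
Your proposal is correct and follows exactly the paper's own argument: the paper states the theorem as an immediate consequence of Observation~\ref{obs} (using that $V$ is K\"ahler--Einstein), the non-vanishing of $F_{\mathrm{Td}_2}$ from \cite{Ono}, and Corollary~\ref{cor:corprinc}. The only step you (and the paper) leave implicit is that $F_{\mathrm{Td}_2}\neq 0$ forces its \emph{imaginary part} to be non-identically zero, which follows from the $\mathbb{C}$-linearity of $F_q$ on the complex Lie algebra $\mathfrak{h}$ (replace $Z$ by $JZ$ if needed).
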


\end{document}